\documentclass[10pt]{article}
\usepackage{mathrsfs}
\usepackage{amsmath,amscd,amsthm,amsfonts}
\textwidth 15.5cm \textheight 23.5cm
\oddsidemargin=0.5cm\topmargin=-1cm \numberwithin{equation}{section}

\theoremstyle{plain}
\newtheorem{theorem}{Theorem}[section]

\newtheorem{definition}[theorem]{Definition}

\newtheorem{proposition}[theorem]{Proposition}
\newtheorem{remark}[theorem]{Remark}

\newtheorem*{Theorem 3.1$'$}{Theorem 3.1$'$}

\begin{document}
\vskip 5.1cm
\title{On an entropy of $\mathbb{Z}_+^k$-actions \footnotetext{\\
\emph{2000 Mathematics Subject Classification}: 37A35, 37B10, 37B40.\\
\emph{Keywords and phrases}: entropy; preimage entropy; $\mathbb{Z}_+^k$-action.\\
 It is supported by NSFC(No:11071054), the Key Project of Chinese Ministry of Education(No:211020), NCET(No:11-0935)  and the SRF for ROCS, SEM.}}
\author {Yujun Zhu and Wenda Zhang\\
\small {College of Mathematics and Information Science, and}\\
\small {Hebei Key Laboratory of Computational Mathematics and Applications}\\
\small {Hebei Normal University, Shijiazhuang, 050024, P.R.China}}
\date{}
\maketitle
\begin{center}
\begin{minipage}{130mm}
{\bf Abstract}: In this paper, a definition of entropy for
$\mathbb{Z}_+^k(k\geq2)$-actions due to S. Friedland \cite{Friedland} is
studied. Unlike the traditional definition, it may take a nonzero value for
actions whose generators have finite (even zero) entropy as single
transformations. Some basic properties are investigated and its
value for the $\mathbb{Z}_+^k$-actions on circles generated
by expanding endomorphisms is given. Moreover, an upper bound of this entropy for the $\mathbb{Z}_+^k$-actions on tori generated
by expanding endomorphisms is obtained via the
preimage entropies, which are entropy-like invariants depending on
the ``inverse orbits" structure of the system.

\end{minipage}
\end{center}
\section{Introduction}

Based on the need in the study of lattice statistical mechanics,
Ruelle \cite{Ruelle} introduced the concept of entropy for
$\mathbb{Z}^k(k\geq2)$-actions. A necessary condition for this entropy to be
positive is that the generators should have infinite entropy as
single transformations. In \cite{Friedland}, Friedland gave a new
definition of entropy for $\mathbb{Z}^k$-actions (or, more
generally, $\mathbb{Z}_+^k$-actions, here $\mathbb{Z}_+=\{0,1,2,\cdots\}$) which is appropriate for that
whose generators have finite entropy as single transformations.

We begin by recalling the definition of topological entropy for
$\mathbb{Z}_+$-actions. Let $(X, d_X)$ be a compact metric space and
$C^0(X,X)$  the set of continuous maps on $X$. Any $f\in C^0(X,X)$
naturally generates a  $\mathbb{Z}_+$-action: $\mathbb{Z}_+
\longrightarrow C^0(X,X), n\longmapsto f^n$. Let $K$ be a compact
subset of $X$. For any $\varepsilon >0$, a subset $E\subset X$ is
said to be an $(f,n,\varepsilon)$-spanning set of $K$, if for any
$x\in K$, there exists $y\in E$ such that
$$
\max_{0\leq i\leq
n-1}d_X(f^i(x),f^i(y))\leq \varepsilon.
$$
Let $r_{d_X}(f,n,\varepsilon,K)$ denote the smallest cardinality of
any $(f,n,\varepsilon)$-spanning set of $K$. A subset $F\subset K$
is said to be an $(f,n,\varepsilon)$-separated set of $K$, if
$x,y\in F,x\neq y$, implies
$$
\max_{0\leq i\leq
n-1}d_X(f^i(x),f^i(y))> \varepsilon.
$$
Let $s_{d_X}(f,n,\varepsilon,K)$ denote the largest cardinality of
any $(f,n,\varepsilon)$-separated set of $K$. Let
$$
h(f,K)=\lim_{\varepsilon\rightarrow
0}\limsup_{n\rightarrow\infty}\frac{1}{n}\log
\;s_{d_X}(f,n,\varepsilon,K).
$$
By a standard discussion, we can give $h(f,K)$ using spanning set,
i.e., we can replace $s_{d_X}(f,n,\varepsilon,K)$ by
$r_{d_X}(f,n,\varepsilon,K)$ in the above equation. The
\emph{topological entropy} of $f$ is defined by $h(f)=h(f,X)$.

Now we recall the traditional definition and Friedland's definition
of entropy for $\mathbb{Z}_+^k$-actions.
 Let $(X, d_X)$ be a compact metric space and
$T:\mathbb{Z}_+^k\longrightarrow C^0(X,X)$  a continuous
$\mathbb{Z}_+^k$-action on $X$.  Denote the cube
$\prod\limits_{i=1}^{k}\{0,\cdots,n-1\}\subset\mathbb{Z}_+^k$ by
$Q_n$. A set $E\subset X$ is $(T, n,\varepsilon)$-spanning  if
for every $x\in X$ there exists a $y\in E$ with
$d_X\bigl(T^{\bf{i}}(x), T^{\bf{i}}(y)\bigr)\leq\varepsilon$ for all
${\bf{i}}\in Q_n$. Let $r_{d_X}(T, n,\varepsilon,X)$ be the smallest
cardinality of any $(T, n,\varepsilon)$-spanning set. A set
$F\subset X$ is an $(T, n,\varepsilon)$-separated set if for any
$x,y\in F$, $x\neq y$ implies $d_X\bigl(T^{\bf{i}}(x),
T^{\bf{i}}(y)\bigr)>\varepsilon$ for some ${\bf{i}}\in Q_n$. Let
$s_{d_X}(T, n,\varepsilon,X)$ be the largest cardinality of any
$(T, n,\varepsilon)$-separated set. The traditional definition
of $\tilde{h}(T)$ is given by
\begin{equation}\label{entropy}
\tilde{h}(T)=\lim_{\varepsilon\rightarrow
0}\limsup_{n\rightarrow\infty}\frac{1}{n^k}\log
\;s_{d_X}(T, n,\varepsilon,X).
\end{equation}
By a standard discussion, we can give $\tilde{h}(T)$ using spanning
set, i.e., we can replace $s_{d_X}(T, n,\varepsilon,X)$ by
$r_{d_X}(T, n,\varepsilon,X)$ in (\ref{entropy}). For the general theory of entropy of $\mathbb{Z}^k$-actions, see Schmidt's comprehensive book \cite{Schmidt}, and for the more general theory of entropy for countable amenable group actions, see for example \cite{Ornstein} and \cite{Huang}.

It is well known that a necessary condition for $\tilde{h}(T)$ to be
positive is that the generators
$\{T_i:=T(0,\cdots,0,1_{(i)},0,\cdots,0)\}_{i=1}^k$ should have
infinite entropy as single transformations. In contrast to the
traditional definition, Friedland \cite{Friedland} introduced another
definition of the topological entropy as follows. Define the \emph{orbit space} of $T$ by
$$
X_T=\bigl\{\bar{x}=\{x_n\}_{n\in
{\mathbb{Z}}_+}\in\prod_{n\in{\mathbb{Z}}_+}X : \text{ for any }  n\in{\mathbb{Z}}_+, T_{i_n}(x_n)=x_{n+1}
\text{ for some } T_{i_n}\in\{T_i\}_{i=1}^k \bigr\}.
$$
This is a closed subset of the compact space
$\prod\limits_{n\in{\mathbb{Z}}_+}X$ and so is again compact. A
natural metric on $X_T$ is defined by
\begin{equation}\label{metric}
d_{X_T}\bigl(\bar{x},\bar{y}\bigr)=\sum_{n=0}^\infty\frac{d_X(x_n,y_n)}{2^n}
\end{equation}
for $\bar{x}=\{x_n\}_{n\in
{\mathbb{Z}}_+}, \bar{y}=\{y_n\}_{n\in
{\mathbb{Z}}_+}\in X_T$.
We can define a natural shift map $\sigma_T: X_T\rightarrow X_T$ by
$\sigma_T(\{x_n\}_{n\in {\mathbb{Z}}_+})=\{x_{n+1}\}_{n\in
{\mathbb{Z}}_+}$. Thus we have associated in a natural way a
${\mathbb{Z}}_+$-action with the ${\mathbb{Z}}_+^k$-action.
\begin{definition}
We define the \emph{topological entropy} $h(T)$ of the
${\mathbb{Z}}_+^k$-action $T$ to be the topological entropy of the
map $\sigma_T: X_T\rightarrow X_T$, i.e.,
$$
h(T)=h(\sigma_T)=\lim_{\varepsilon\rightarrow
0}\limsup_{n\rightarrow\infty}\frac{1}{n}\log
\;s_{d_{X_T}}(\sigma_T, n,\varepsilon,X_T),
$$
where $s_{d_{X_T}}(\sigma_T, n,\varepsilon,X_T)$ is the largest cardinality of any
$(\sigma_T, n,\varepsilon)$-separated set in $X_T$.
(Similarly, we can replace $s_{d_{X_T}}(\sigma_T, n,\varepsilon,X_T)$ by $r_{d_{X_T}}(\sigma_T, n,\varepsilon,X_T)$,
the smallest cardinality of any $(\sigma_T, n,\varepsilon)$-spanning set in $X_T$.)
\end{definition}

The main purpose of this paper is to investigate some fundamental
properties of the entropy $h(T)$ of ${\mathbb{Z}}_+^k$-actions and
evaluate its values for some standard examples.

In section 2, some basic properties of $h(T)$ are given. It is well
known that for any map $f:X\rightarrow X$, the power rule for its
entropy holds, i.e., $h(f^m)=mh(f)$ for any positive integer $m$.
However, we can only get $h(T^m)\leq mh(T)$ for any
$\mathbb{Z}_+^k$-action $T$ (Proposition \ref{prop1}). We also show
in Proposition \ref{prop1} that the entropy of any subgroup action
(especially, each generator) of $T$ is less than or equal to that of
$T$. When each generator $T_i$ is Lipschitzian with Lipschitz
constant $L(T_i)$, then we can get an upper bound of $h(T)$ by
$\log\sum\limits_{i=1}^kL_+(T_i)^{D(x)}$, where $L_+(T_i)=\max\{1,
L(T_i)\}$ and $D(x)$ is the ball dimension of $X$ (Proposition
\ref{prop2}). The entropy of a skew product transformation which is
an extension of $(X_T, \sigma_T)$ is also considered (Proposition
\ref{prop3}).

In Section 3, we use Gellar and Pollicott's method \cite{Geller} to show  (in Theorem 3.1) that for the
$\mathbb{Z}_+^k$-action $T$ on the unit circle $X=\mathbb{S}^1$
generated by pairwise different endomorphisms $T_i(x)=L_i
x(\text{mod } 1)$, where $L_i, 1\leq i\leq k$ are all positive
integers greater than $1$,
\begin{equation}\label{circleentropy}
h(T)=\log \sum\limits_{i=1}^kL_i.
\end{equation}

In Section 4, we use other entropy-like invariants, the so called
preimage entropies which rely on the preimage structure of the
system,  to show (in Theorem 4.1) that for the $\mathbb{Z}_+^k$-action $T$ on the torus $\mathbb{T}^n$
generated by pairwise different matrices $\{A_i\}_{i=1}^k$ whose
eigenvalues $\{\lambda_i^{(1)}, \cdots, \lambda_i^{(n)}\}_{i=1}^k$
are of modulus greater than $1$,
\begin{equation}\label{torusentropy}
h(T)\leq\log\Bigl(\sum_{i=1}^k\prod_{j=1}^n|\lambda_i^{(j)}|\Bigr).
\end{equation}

\section{Some basic properties of $h(T)$}\label{section2}
Throughout this section we always assume that $T:\mathbb{Z}_+^k\longrightarrow C^0(X,X)$ is a continuous $\mathbb{Z}_+^k$-action with the generators $\{T_i : 1\leq i\leq k\}$.

It is well known that topological entropy of a map (i.e., a
$\mathbb{Z}_+$-action) is invariant under conjugacy. Now we can show
that for any $\mathbb{Z}_+^k$-action a similar property holds true.
We call another $\mathbb{Z}_+^k$-action $T'$ is \emph{topologically
conjugate to} $T$, if their generators $\{T_i : 1\leq i\leq k\}$ and
$\{T'_i : 1\leq i\leq k\}$ are pairwise conjugate under the same
homeomorphism $h$, i.e. we have the following commutative diagrams
\begin{equation*}
\begin{CD}
X @>T_{i}>>  X\\
@VhVV   @VVhV\\
X @>T'_{i}>>  X
\end{CD}
\end{equation*}
for each $i$.  Since  we can express $T'_{i}$ by $hT_{i}h^{-1}$, by
the definition of the entropy we can get the following property
immediately.

\begin{proposition}\label{invariant}
Let $T$ and $T'$ be two conjugate  $\mathbb{Z}_+^k$-actions with generators
$\{T_i : 1\leq i\leq
k\}$ and $\{T'_i : 1\leq i\leq
k\}$ respectively, then
$$
h(T)=h(T').
$$
\end{proposition}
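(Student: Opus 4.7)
The plan is to lift the conjugating homeomorphism $h:X\to X$ to a conjugacy between the associated orbit spaces $X_T$ and $X_{T'}$, and then reduce the statement to the classical invariance of topological entropy of a single continuous map on a compact metric space under topological conjugacy (applied to $\sigma_T$ and $\sigma_{T'}$). Concretely, I define $\bar h: X_T\to X_{T'}$ by $\bar h(\{x_n\}_{n\in\mathbb{Z}_+})=\{h(x_n)\}_{n\in\mathbb{Z}_+}$, and define $\overline{h^{-1}}:X_{T'}\to X_T$ analogously using $h^{-1}$.

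First I would check that $\bar h$ takes $X_T$ into $X_{T'}$. If $\bar x=\{x_n\}\in X_T$, then for each $n$ there is an index $i_n$ with $T_{i_n}(x_n)=x_{n+1}$; applying $h$ and using $T'_{i_n}=hT_{i_n}h^{-1}$ gives $T'_{i_n}(h(x_n))=h(T_{i_n}(x_n))=h(x_{n+1})$, so the image sequence indeed lies in $X_{T'}$. The analogous statement for $\overline{h^{-1}}$ holds by symmetry, and the two maps are manifestly two-sided inverses of each other. Continuity of $\bar h$ (and of $\overline{h^{-1}}$) in the metric (\ref{metric}) follows from uniform continuity of $h$ on the compact space $X$: given $\varepsilon>0$, pick $N$ so that $\sum_{n>N}\mathrm{diam}(X)/2^n<\varepsilon/2$, then $\delta>0$ with $d_X(x,y)<\delta$ implying $d_X(h(x),h(y))<\varepsilon/2$; proximity of $\bar x,\bar y$ in $d_{X_T}$ forces $d_X(x_n,y_n)<\delta$ for $n=0,\dots,N$, and the desired bound on $d_{X_{T'}}(\bar h(\bar x),\bar h(\bar y))$ follows. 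Hence $\bar h$ is a homeomorphism.

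Finally, the intertwining relation $\bar h\circ\sigma_T=\sigma_{T'}\circ\bar h$ is immediate, since both sides send $\{x_n\}$ to $\{h(x_{n+1})\}$. Therefore $\sigma_T$ and $\sigma_{T'}$ are topologically conjugate as self-maps of compact metric spaces, and the classical conjugacy-invariance of topological entropy for $\mathbb{Z}_+$-actions yields $h(\sigma_T)=h(\sigma_{T'})$, which is exactly $h(T)=h(T')$. There is no substantive obstacle in this argument; the only micro-step requiring care is the uniform continuity estimate that promotes continuity of $h$ on $X$ to continuity of $\bar h$ in the infinite-product metric on $X_T$.
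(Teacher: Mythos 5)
Your proposal is correct and is exactly the argument the paper has in mind: the paper treats the statement as immediate from $T'_i=hT_ih^{-1}$ and the definition of $h(T)$, and your lift $\bar h(\{x_n\})=\{h(x_n)\}$ conjugating $(X_T,\sigma_T)$ to $(X_{T'},\sigma_{T'})$, followed by the classical conjugacy-invariance of topological entropy, simply fills in the routine details (well-definedness, continuity via uniform continuity of $h$, and the intertwining relation) that the paper omits.
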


The following proposition concerns the relation between the entropy of the power $T^m$ and that of $T$, and the relation between the entropy of the subgroup action $T^{(l)}$  and that of $T$.

\begin{proposition}\label{prop1}
Let $T:\mathbb{Z}_+^k\longrightarrow C^0(X,X)$ be a continuous
$\mathbb{Z}_+^k$-action with the generators $\{T_i : 1\leq i\leq
k\}$. We have the following properties of the entropy $h(T)$.

(1) For $m>1$, we have $h(T^m)\leq mh(T)$, where $T^m$ is the
${\mathbb{Z}}_+^k$-action with the generators $\{T_i^m : 1\leq
i\leq k\}$.

(2) For any $1\leq l<k$ and any ${\mathbb{Z}}_+^l$-action
$T^{(l)}$ generated by some subcollection
$\{T_{i_1},\cdots,T_{i_l}\}\subset\{T_1,\cdots,T_k\}$, we have
$$
h(T^{(l)})\leq h(T).
$$
In particular, $h(T_i)\leq h(T)$ for any $1\leq i\leq k$.

\end{proposition}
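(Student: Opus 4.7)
The plan is to treat the two parts separately: part~(2) is a direct subsystem argument, while part~(1) requires a factor-map construction.

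For part~(2), the key observation is that $X_{T^{(l)}}$ sits inside $X_T$ as a closed $\sigma_T$-invariant subset. Any sequence $\{x_n\}$ satisfying $T_{i_{s_n}}(x_n)=x_{n+1}$ for some $s_n\in\{1,\ldots,l\}$ is in particular an orbit of the form required by $X_T$, and the defining condition is closed and shift-invariant, so $X_{T^{(l)}}\subset X_T$ is closed and invariant. The restriction of $\sigma_T$ to this subset coincides with $\sigma_{T^{(l)}}$, and the inequality $h(T^{(l)})\leq h(T)$ then follows from the standard monotonicity of topological entropy under restriction to closed invariant subsystems.

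For part~(1), I would compare $(X_{T^m},\sigma_{T^m})$ with $(X_T,\sigma_T^m)$ by producing an appropriate factor map. Define
$$
Y=\bigl\{\bar{y}=\{y_n\}_{n\geq 0}\in X_T:\forall n\in\mathbb{Z}_+,\ \exists\, j_n\in\{1,\ldots,k\},\ y_{mn+j+1}=T_{j_n}(y_{mn+j})\text{ for }0\leq j\leq m-1\bigr\},
$$
so that $Y$ collects those orbits whose generators may be chosen constant on each block of $m$ consecutive indices aligned with multiples of~$m$. I would then verify: (i)~$Y$ is closed, as a countable intersection (over $n$) of finite unions (over the $k$ choices of $j_n$) of closed conditions on finitely many coordinates; (ii)~$Y$ is $\sigma_T^m$-invariant, since shifting by~$m$ preserves the block structure; and (iii)~the projection $\pi\colon Y\to X_{T^m}$, $\pi(\bar{y})=\{y_{mn}\}_{n\geq 0}$, is continuous, surjective, and satisfies $\pi\circ\sigma_T^m=\sigma_{T^m}\circ\pi$. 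Surjectivity comes from the explicit lift: given $\bar{x}\in X_{T^m}$ with $T_{j_n}^m(x_n)=x_{n+1}$, set $y_{mn+j}=T_{j_n}^j(x_n)$. Then
$$
h(T^m)=h(\sigma_{T^m})\leq h(\sigma_T^m|_Y)\leq h(\sigma_T^m)=m\,h(\sigma_T)=m\,h(T),
$$
applying in turn monotonicity of topological entropy under factor maps, monotonicity under restriction to a closed invariant subset, and the standard power rule $h(f^m)=m\,h(f)$ for single transformations.

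The main obstacle is in part~(1): there is \emph{no} canonical continuous section $X_{T^m}\to X_T$, because a lift requires choosing a generator $j_n$ at each step, and these choices need neither be unique nor vary continuously with $\bar{x}$. One is therefore forced to work with a factor map out of a proper $\sigma_T^m$-invariant subset $Y\subset X_T$, and this asymmetry is precisely why one does not expect equality $h(T^m)=m\,h(T)$ in general.
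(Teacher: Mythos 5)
Your proposal is correct and follows essentially the same route as the paper: part (2) by identifying $X_{T^{(l)}}$ as a closed $\sigma_T$-invariant subset of $X_T$, and part (1) via the same block-structured subset (your $Y$ is the paper's $\tilde{X}$) with the projection $\pi(\bar{y})=\{y_{mn}\}$ semiconjugating $\sigma_T^m|_Y$ to $\sigma_{T^m}$, followed by monotonicity and the power rule. Your explicit verification of surjectivity of $\pi$ (via the lift $y_{mn+j}=T_{j_n}^j(x_n)$) is a detail the paper leaves implicit but which is indeed needed for the factor inequality, so your write-up is, if anything, slightly more complete.
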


\begin{proof}

(1) Let
$\tilde{X}=\bigl\{\{x_n\}_{n\in{\mathbb{Z}}_+}\in\prod\limits_{n\in{\mathbb{Z}}_+}X\;:\;
\text{for any} \;j\in{\mathbb{Z}}_+, \text{there exists some}
\;1\leq i\leq k, \;\text{such that for all} \;0\leq s\leq m,
x_{jm+s}=T_i^s(x_{jm})\bigr\}$. It is obvious that $\tilde{X}\subset
X_T$. Define $\pi: \tilde{X}\rightarrow X_{T^m}$ by
$\pi\bigl(\{x_n\}_{n\in{\mathbb{Z}}_+}\bigr)=\{x_{nm}\}_{n\in{\mathbb{Z}}_+}$.
It is easy to obtain that $\pi$ is continuous and
$$
\sigma_{T^m}\circ\pi\bigl(\{x_n\}_{n\in{\mathbb{Z}}_+}\bigr)=\pi\circ\sigma_T^m\bigl(\{x_n\}_{n\in{\mathbb{Z}}_+}\bigr)
$$
for any $\{x_n\}_{n\in{\mathbb{Z}}_+}\in \tilde{X}$. Therefore,
$$
h(\sigma_{T^m})\leq h(\sigma_{T}^m|_{\tilde{X}})\leq
h(\sigma_{T}^m)=mh(\sigma_{T}),
$$
in which the last equality is from the well known power rule for
topological entropy (see \cite{Walters}, for example).

(2) For the ${\mathbb{Z}}_+^l$-action $T^{(l)}$ generated by some
 $\{T_{i_1},\cdots,T_{i_l}\}$, denote
$$
X_{T^{(l)}}=\bigl\{\{x_n\}_{n\in{\mathbb{Z}}_+}\in\prod\limits_{n\in{\mathbb{Z}}_+}X\;:\;
x_{n+1}=T_{i_j}(x_n)\; \text{for some} \;1\leq j\leq l\bigr\}.
$$
It is obvious that
$$
h(T^{(l)})=h(\sigma_{T^{(l)}})=h(\sigma_T|_{X_{T^{(l)}}})\leq
h(\sigma_{T})=h(T).
$$
\end{proof}

\begin{remark}
For (1) of Proposition \ref{prop1}, either of the equality and strictly inequality
in ``$h(T^m)\leq mh(T)$" can possibly hold. For example, for the $\mathbb{Z}_+^k$-action $T$ in Theorem 3.1, $h(T^m)=mh(T)$;
for the $\mathbb{Z}_+^k$-action $T$ on the unit circle $\mathbb{S}^1$ whose generators $\{T_i: 1\leq i\leq k\}$ are pairwise different rotations, by Theorem 4 of \cite{Geller} we have $h(T^m)=h(T)=\log k< mh(T)$.

From (2) of Proposition \ref{prop1}, for any $\mathbb{Z}_+^k$-action
$T$ with generators $\{T_i: 1\leq i\leq k\}$, the entropy of each
generator is less than or equal to that of $T$, i.e., $h(T_i)\leq
h(T), 1\leq i\leq k$. In general, $h(T_i)$ is strict less than
$h(T)$, even for the actions with some trivial generators. For example, for the above $\mathbb{Z}_+^k$-action $T$ on the unit circle $\mathbb{S}^1$ whose generators are pairwise different rotations, it is obvious that $h(T_i)=0$ for each $i$, but $h(T)=\log k$.

However, for any $\mathbb{Z}_+^k$-action $T$ with positive traditional entropy, i.e., $\tilde{h}(T)>0$, such as the full $k$-dimensional $m$-shift transformation $T$ on the space
$$
X=\{0,\cdots,m-1\}^{\mathbb{Z}_+^k}=\bigl\{\{x_{(i_1,\cdots,i_k)}\}_{(i_1,\cdots,i_k)\in{\mathbb{Z}}_+^k}\in\prod\limits_{(i_1,\cdots,i_k)\in{\mathbb{Z}}_+^k}\{0,
1, \cdots, m-1\}\bigr\}
$$
generated by
$$
T_j: \{x_{(i_1,\cdots,i_k)}\}
\mapsto\{x_{(i_1,\cdots,i_{j}+1,\cdots,i_k)}\}, \;\;1\leq j\leq k,
$$
we have $h(T)=h(T_i)=\infty$ for any $1\leq i\leq k$.
\end{remark}

Let $(X,d)$ be a compact metric space and $b(\varepsilon)$ the
minimum cardinality of covering of $X$ by $\varepsilon$-balls. Then
$$
D(X)=\limsup_{\varepsilon\rightarrow0}\frac{{\log}b(\varepsilon)}{|{\log}\varepsilon|}\in\mathbb{R}\cup\{\infty\}
$$
is called the \emph{ball dimension} of $X$.
It is well known that if a map $f:X\longrightarrow X$ is Lipschitzian with Lipschitz constant $L(f)$, then
$$
h(f)\leq D(X)\log(\max\{1, L(f)\}),
$$
see Theorem 3.2.9 of \cite{Katok} for example. In the following, we give the corresponding inequalities for  ${\mathbb{Z}}_+^k$-actions.

\begin{proposition}\label{prop2}
Let $T:\mathbb{Z}_+^k\longrightarrow C^0(X,X)$ be a continuous
$\mathbb{Z}_+^k$-action with the generators $\{T_i : 1\leq i\leq
k\}$. If  the ball dimension of $X$ is finite, i.e., $D(X)<\infty$, and each $T_i:
X\rightarrow X$ is Lipschitzian with Lipschitz constant $L(T_i)$, then
$$
h(T)\leq\log\sum_{i=1}^{k}L_+(T_i)^{D(X)},
$$
where $L_+(T_i)=\max\{1, L(T_i)\}$.

In particular, if $X$ is an $m$-dimensional compact Riemannian
manifold and each $T_i,1\leq i\leq k$, is differentiable, then
$$
h(T)\leq \log \sum_{i=1}^{k}\max\{1,\sup_{ x\in X}\|d_xT_i\|^m\}.
$$
\end{proposition}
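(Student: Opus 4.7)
The plan is to bound $h(T)=h(\sigma_T)$ directly by constructing an efficient $(\sigma_T,n,\varepsilon)$-spanning set of $X_T$ and reading off its exponential growth rate.

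First I would absorb the tail of the metric $d_{X_T}$. Given $\varepsilon>0$, choose $M=M(\varepsilon)$ so that $\operatorname{diam}(X)\cdot 2^{-M+1}\leq \varepsilon/2$. A routine estimate using \eqref{metric} then shows: if $\bar{x},\bar{y}\in X_T$ satisfy $d_X(x_l,y_l)\leq \varepsilon/4$ for every $0\leq l\leq n+M-2$, then $d_{X_T}(\sigma_T^j\bar{x},\sigma_T^j\bar{y})\leq \varepsilon$ for every $0\leq j\leq n-1$. So it is enough to $\varepsilon/4$-shadow the first $n+M-1$ coordinates in $X$.

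Next I would index such initial segments by the sequence of generators. For each $\mathbf{i}=(i_0,\dots,i_{n+M-2})\in\{1,\dots,k\}^{n+M-1}$, the composition $T_{i_{l-1}}\circ\cdots\circ T_{i_0}$ has Lipschitz constant at most $\prod_{s=0}^{l-1}L_+(T_{i_s})$, so if $d_X(x_0,y_0)\leq \eta_{\mathbf{i}}:=(\varepsilon/4)/\prod_{l=0}^{n+M-2}L_+(T_{i_l})$ and $\bar{y}$ is obtained by following $\mathbf{i}$ from $y_0$ (extended to any valid orbit in $X_T$ thereafter), the shadowing condition above holds. Taking the union, over all $\mathbf{i}$, of minimal $\eta_{\mathbf{i}}$-covers of $X$ (one $\bar{y}$ per ball center) yields an $(\sigma_T,n,\varepsilon)$-spanning set of $X_T$ of cardinality at most $\sum_{\mathbf{i}} b(\eta_{\mathbf{i}})$. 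Using that for any $D'>D(X)$ one has $b(\eta)\leq C_{D'}\eta^{-D'}$ in the relevant range,
\[
\sum_{\mathbf{i}} b(\eta_{\mathbf{i}})\leq C_{D'}(4/\varepsilon)^{D'}\Bigl(\sum_{i=1}^k L_+(T_i)^{D'}\Bigr)^{n+M-1}.
\]
Taking logarithms, dividing by $n$, letting $n\to\infty$, and finally $D'\searrow D(X)$ (the right side is continuous in $D'$ since each $L_+(T_i)\geq 1$) yields the main inequality.

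For the ``in particular'' statement I would just substitute $D(X)=m$ for a compact $m$-dimensional Riemannian manifold and $L(T_i)=\sup_{x\in X}\|d_xT_i\|$ for a $C^1$ map, together with the identity $\max\{1,a\}^m=\max\{1,a^m\}$ for $a\geq 0$ and $m\geq 1$. The one delicate step is the first: one must pick $M$ large enough that the tail of $d_{X_T}$ is $\leq \varepsilon/2$ \emph{uniformly} over $j\in\{0,\dots,n-1\}$. Once the truncation parameter $M$ is fixed (depending only on $\varepsilon$), everything else reduces to Lipschitz propagation from a single initial coordinate plus a product-formula count over generator sequences, and the factor $n+M-1$ becomes $n$ to leading order.
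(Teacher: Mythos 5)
Your argument is correct, and at its core it is the same counting scheme as the paper's: decompose initial orbit segments according to the itinerary of generators of length $n+O_\varepsilon(1)$, reduce the count within each itinerary class to a covering/packing problem in $X$ at scale comparable to $\varepsilon/\prod_l L_+(T_{i_l})$, invoke the ball dimension, and sum over itineraries to produce $\bigl(\sum_{i=1}^k L_+(T_i)^{D}\bigr)^{n+O_\varepsilon(1)}$. The implementations differ, though. You stay in the original metric $d_{X_T}$, handle the tail by an explicit truncation length $M(\varepsilon)$, and build a spanning set by pushing $\eta_{\mathbf i}$-covers of $X$ forward along each itinerary (extending arbitrarily afterwards, which is legitimate since any forward continuation lies in $X_T$); the paper instead replaces $d_{X_T}$ by the uniformly equivalent sup metric $d'_{X_T}$ weighted by $\rho^{-n}$ with $\rho>\max_i L_+(T_i)$, counts \emph{separated} sets, and uses the fact that each truncated itinerary class becomes isometric to $(X,d_X)$ in that metric, with the tail controlled by $K(\varepsilon)=\lceil\log_\rho(\mathrm{diam}(X)/\varepsilon)\rceil$. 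Your route is slightly more elementary (no metric change, no isometry claim) and, in one respect, more careful: since $D(X)$ is defined as a $\limsup$, the clean statement is $b(\eta)\leq C_{D'}\eta^{-D'}$ for each $D'>D(X)$ and small $\eta$, and your two-step limit $D'\searrow D(X)$ (using continuity of $D'\mapsto\log\sum_i L_+(T_i)^{D'}$) fills a point the paper glosses over with its fixed constant $\alpha\varepsilon^{-D(X)}$. Two harmless slips to note: your itinerary has one more symbol than the $n+M-1$ tracked coordinates require (this only shrinks $\eta_{\mathbf i}$ and inflates the count by a factor $k$, immaterial after taking $\frac1n\log$), and in the ``in particular'' part one only needs $D(X)\leq m$ together with $L(T_i)=\sup_{x\in X}\|d_xT_i\|$, which is exactly how you use it.
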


\begin{proof}
It is well known that the topological entropy is unchanged by taking
uniformly equivalent metrics (Theorem 7.4 of \cite{Walters}). Here
we say two metrics $d_X$ and $d'_X$ on $X$ are uniformly equivalent
if
$$
id: (X,d_X)\longrightarrow (X,d'_X)\;\;\mbox{and}\;\; id: (X,d'_X)\longrightarrow (X,d_X)
$$
are both uniformly continuous.

Take $\rho>\max\limits_{1\leq i\leq k}L_+(T_i)$. Clearly, $\rho>1$.
Now we define two metrics $\tilde{d}_{X_T}$ and $d'_{X_T}$ on $X_T$
by
\begin{equation}\label{metric2}
\tilde{d}_{X_T}\bigl(\bar{x},\bar{y}\bigr)=\sum_{n=0}^\infty\frac{d_X(x_n,y_n)}{\rho^n}\;\;\mbox{and}\;\; d'_{X_T}(\bar{x}, \bar{y})=\sup_{n\geq
0}\frac{d_X(x_n,y_n)}{\rho^n}
\end{equation}
for any $\bar{x}=\{x_n\}_{n\in
{\mathbb{Z}}_+}, \bar{y}=\{y_n\}_{n\in
{\mathbb{Z}}_+}$. Since $\rho>1$, the metrics $\tilde{d}_{X_T}$ and $d'_{X_T}$ are both
uniformly equivalent to $d_{X_T}$ which is defined in (\ref{metric}).

In the following we will estimate the entropy $h(T)$ with respect to
the metric $d'_{X_T}$. For any $\varepsilon>0$, consider a maximal
$(\sigma_T, m, \varepsilon)$-separated set $E$ of  $X_T$ with cardinality
$s_{d'_{X_T}}(\sigma_T, m, \varepsilon, X_T)$. Obviously, for any
$\bar{x}=\{x_n\}_{n\in {\mathbb{Z}}_+}, \bar{y}=\{y_n\}_{n\in
{\mathbb{Z}}_+}\in E$ with $\bar{x}\neq\bar{y}$, we have
$$
\max_{0\leq s\leq m-1}\sup_{n\geq
s}\frac{d_X(x_n,y_n)}{\rho^{n-s}}>\varepsilon.
$$
Let
$K(\varepsilon)=\big\lceil\log_\rho\frac{\mbox{diam}(X,d_X)}{\varepsilon}\big\rceil$.
In order to estimate the cardinality of $E$, we will write it into
the union of subsets which consists of the points in $E$ with the
first $m+K(\varepsilon)$ elements lie in the same orbit space of
some sequence of $(T_{i_1}, T_{i_2}, \cdots,
T_{i_{m+K(\varepsilon)-1}})$. From the definition of $d'_{X_T}$ and
the choice of $K(\varepsilon)$, we can estimate the cardinality of
each of these subsets easily.

For any $(i_1,\cdots,i_{m+K(\varepsilon)})\in
\prod\limits_{n=1}^{m+K(\varepsilon)}\{1,\cdots,k\}$, denote
$$
\tilde{X}_{(i_1,\cdots,i_{m+K(\varepsilon)})}=\bigl\{\bar{x}=\{x_n\}_{n\in{\mathbb{Z}}_+}\;:\;x_n=T_{i_n}(x_{n-1})\;
\mbox{for}\; 1\leq n\leq m+K(\varepsilon)-1\bigr\}
$$
and
$$
\tilde{E}_{(i_1,\cdots,i_{m+K(\varepsilon)})}=E\cap\tilde{X}_{(i_1,\cdots,i_{m+K(\varepsilon)})}.
$$
Clearly,
$$
X_T=\bigcup_{(i_1,\cdots,i_{m+K(\varepsilon)})\in
\prod\limits_{n=1}^{m+K(\varepsilon)}\{1,\cdots,k\}}\tilde{X}_{(i_1,\cdots,i_{m+K(\varepsilon)})}
$$
and
$$
E=\bigcup_{(i_1,\cdots,i_{m+K(\varepsilon)})\in
\prod\limits_{n=1}^{m+K(\varepsilon)}\{1,\cdots,k\}}\tilde{E}_{(i_1,\cdots,i_{m+K(\varepsilon)})}.
$$
(Note that, each of them may be not a disjoint union.) Moreover, by
the choice of $\rho$ and $K(\varepsilon)$, we can see that for any
$\overline{x}=\{x_n\}_{n\in{\mathbb{Z}}_+},
\overline{y}=\{y_n\}_{n\in{\mathbb{Z}}_+}\in
\tilde{E}_{(i_1,\cdots,i_{m+K(\varepsilon)})}$, if $x_n=y_n$ for any
$0\leq n\leq m+K(\varepsilon)-1$ then $\overline{x}=\overline{y}$.
Therefore, if we denote the projection from
$\prod\limits_{n\in{\mathbb{Z}}_+}X$ to its factor
$\prod\limits_{n=0}^{m+K(\varepsilon)-1}X$ by
$\mbox{Proj}_{m+K(\varepsilon)}$ and let
$$
E_{(i_1,\cdots,i_{m+K(\varepsilon)})}=\mbox{Proj}_{m+K(\varepsilon)}(\tilde{E}_{(i_1,\cdots,i_{m+K(\varepsilon)})}),
$$
then
$$
\mbox{card}
(E_{(i_1,\cdots,i_{m+K(\varepsilon)})})=\mbox{card}
(\tilde{E}_{(i_1,\cdots,i_{m+K(\varepsilon)})}).
$$
Define a metric $d_{(i_1,\cdots,i_{m+K(\varepsilon)})}$ on
$X_{(i_1,\cdots,i_{m+K(\varepsilon)})}:=\mbox{Proj}_{m+K(\varepsilon)}(\tilde{X}_{(i_1,\cdots,i_{m+K(\varepsilon)})})$
 by
$$
d_{(i_1,\cdots,i_{m+K(\varepsilon)})}(\bar{z},\bar{z}')=\max_{0\leq s\leq
m+K(\varepsilon)-1}\frac{d_X(z_s,z'_s)}{\rho^s}
$$
for any $\bar{z}=\{z_s\}_{s=0}^{m+K(\varepsilon)-1}$ and
$\bar{z}'=\{z'_s\}_{s=0}^{m+K(\varepsilon)-1}\in X_{(i_1,\cdots,i_{m+K(\varepsilon)})}$. From the choice of $\rho$, we can see that
$(X_{(i_1,\cdots,i_{m+K(\varepsilon)})},
d_{(i_1,\cdots,i_{m+K(\varepsilon)})})$ is isometric to $(X,
d_{X})$. That is,
$$
d_{(i_1,\cdots,i_{m+K(\varepsilon)})}(\bar{z},\bar{z}')=d_{X}(z_0, z'_0).
$$
Then the ball dimension of $(X_{(i_1,\cdots,i_{m+K(\varepsilon)})},
d_{(i_1,\cdots,i_{m+K(\varepsilon)})})$ is equal to $D(X)$.

Let
$\varepsilon_{(i_1,\cdots,i_{m+K(\varepsilon)})}=\displaystyle\frac{\varepsilon}{3\prod\limits_{n=1}^{m+K(\varepsilon)-1}L_+(T_{i_n})}$.
Then for any $\bar{z}, \bar{z}'\in
E_{(i_1,\cdots,i_{m+K(\varepsilon)})}$  with $\bar{z}\neq\bar{z}'$, we have
$$
B_{d_{(i_1,\cdots,i_{m+K(\varepsilon)})}}(\bar{z},
\varepsilon_{(i_1,\cdots,i_{m+K(\varepsilon)})})\cap
B_{d_{(i_1,\cdots,i_{m+K(\varepsilon)})}}(\bar{z}',
\varepsilon_{(i_1,\cdots,i_{m+K(\varepsilon)})})=\emptyset.
$$
Therefore,
$$
\mbox{card}
(E_{(i_1,\cdots,i_{m+K(\varepsilon)})})\leq\frac{\alpha}{(\varepsilon_{(i_1,\cdots,i_{m+K(\varepsilon)})})^{D(X)}},
$$
where $\alpha$ is a constant independent of
$(i_1,\cdots,i_{m+K(\varepsilon)})$. Hence
$$
\begin{aligned}
s_{d'_{X_T}}(\sigma_T, m, \varepsilon, X_T)=&\;\mbox{card}(E)\\
\leq&\sum_{(i_1,\cdots,i_{m+K(\varepsilon)})\in
\prod\limits_{n=1}^{m+K(\varepsilon)}\{1,\cdots,k\}}\mbox{card}
(E_{(i_1,\cdots,i_{m+K(\varepsilon)})})\\
\leq&\sum_{(i_1,\cdots,i_{m+K(\varepsilon)})\in
\prod\limits_{n=1}^{m+K(\varepsilon)}\{1,\cdots,k\}}\frac{\alpha\cdot
3^{D(X)}}{\varepsilon^{D(X)}}\cdot\prod\limits_{n=1}^{m+K(\varepsilon)-1}L_+(T_{i_n})^{D(X)}\\
=&\frac{\alpha\cdot
3^{D(X)}}{\varepsilon^{D(X)}}\cdot\Bigl(\sum_{i=1}^kL_+(T_{i})^{D(X)}\Bigr)^{m+K(\varepsilon)-1}.
\end{aligned}
$$
Thus,
$$
h(T)=\lim_{\varepsilon\longrightarrow 0}\limsup_{m\rightarrow\infty}\frac{1}{m}\log s_{d'_{X_T}}(\sigma_T, m, \varepsilon, X_T)\leq\log\sum_{i=1}^kL_+(T_{i})^{D(X)}.
$$
\end{proof}

In the following, we will consider a skew product transformation
such that $(\sigma_T, X_T)$ is its factor, and we will use it to
evaluate the entropy of $\mathbb{Z}_+^k$-action on circles in the
next section.

Let $\Sigma_k=\prod\limits_{n\in {\mathbb{Z}}_+}\{1,\cdots,k\}$ be the standard symbolic space with the product topology. A natural metric $d_{\Sigma_k}$ on $\Sigma_k$ is defined by
$$
d_{\Sigma_k}\bigl(\{i_n\}_{n\in{\mathbb{Z}}_+},\{j_n\}_{n\in{\mathbb{Z}}_+}\bigr)=
\sum_{n=0}^\infty\frac{d(i_n,j_n)}{2^n}
$$
for $\{i_n\}_{n\in{\mathbb{Z}}_+},\{j_n\}_{n\in{\mathbb{Z}}_+}\in \Sigma_k$, where $d(i_n,j_n)=0$ when $i_n=j_n$, and $d(i_n,j_n)=1$ when $i_n\neq j_n$. Let $Y=\Sigma_k\times X$ be endowed with the product topology
and define a map $\tilde{\sigma}: Y\rightarrow Y$ by
$$
\tilde{\sigma}\bigl(\{i_n\}_{n\in{\mathbb{Z}}_+},x\bigr)=\bigl(\{i_{n+1}\}_{n\in{\mathbb{Z}}_+},
T_{i_0}x\bigr).
$$
This is a skew product over the shift transformation
$$
\sigma_k: \Sigma_k\rightarrow\Sigma_k,\;\;\;\;
\{i_n\}_{n\in{\mathbb{Z}}_+}\mapsto\{i_{n+1}\}_{n\in{\mathbb{Z}}_+}.
$$
The basis transformation $\sigma_k$ is a natural factor of this skew product, hence by Bowen's entropy inequality in \cite{Bowen},  we have that
\begin{equation}\label{Bowen}
h(\tilde{\sigma})\leq h(\sigma_k) +\sup_{\{i_n\}_{n\in{\mathbb{Z}}_+}\in\sum_k}
h(\tilde{\sigma},Y_{\{i_n\}_{n\in{\mathbb{Z}}_+}}),
\end{equation}
where
$Y_{\{i_n\}_{n\in{\mathbb{Z}}_+}}=\bigl\{(\{i_n\}_{n\in{\mathbb{Z}}_+},x)\in
Y:x\in X\bigr\}$. Clearly, $h(\sigma_k)=\log k$. Moreover, if the ball dimension of $X$ is finite, i.e., $D(X)<\infty$,
and each $T_i:
X\rightarrow X$ is Lipschitzian with Lipschitz constant $L(T_i)$, then from the proof of Proposition \ref{prop2} we have that
$$
h(\tilde{\sigma}, Y_{\{i_n\}_{n\in{\mathbb{Z}}_+}})\leq\limsup\limits_{n\rightarrow\infty}\displaystyle
\frac{1}{n}\log\prod\limits_{j=1}^nL_+(T_{i_j})^{D(X)}\leq D(X)\log L,
$$
in which $L=\max\limits_{1\leq i\leq k}L_+(T_i)$,
and hence
\begin{equation}\label{Bowen1}
h(\tilde{\sigma})\leq \log k
+D(X)\log L.
\end{equation}

In the following we can see that $\sigma_T$ is another factor of $\tilde{\sigma}$.

\begin{proposition}\label{prop3}
Let $T:\mathbb{Z}_+^k\longrightarrow C^0(X,X)$ be a continuous
$\mathbb{Z}_+^k$-action with the generators $\{T_i : 1\leq i\leq
k\}$ and $\tilde{\sigma}: Y\rightarrow Y$ be as above.  Then $\tilde{\sigma}$ is an extension of $\sigma_T$, and hence
$$
h(T)=h(\sigma_T)\leq h(\tilde{\sigma}).
$$

\end{proposition}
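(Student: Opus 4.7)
The plan is to exhibit an explicit factor map $\pi:Y\to X_T$ intertwining $\tilde\sigma$ and $\sigma_T$, after which the inequality $h(\sigma_T)\le h(\tilde\sigma)$ is immediate from the general fact that topological entropy does not increase under a factor map.

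The natural candidate is $\pi:\Sigma_k\times X\to X_T$ given by
$$
\pi\bigl(\{i_n\}_{n\in\mathbb{Z}_+},x\bigr)=\{x_n\}_{n\in\mathbb{Z}_+},\qquad x_0=x,\quad x_{n+1}=T_{i_n}(x_n).
$$
So first I would check that $\pi$ actually lands in $X_T$, which is essentially built into the definition of $X_T$: the sequence $\{x_n\}$ satisfies $T_{i_n}(x_n)=x_{n+1}$ with $i_n$ coming from the chosen symbolic coordinate. Surjectivity is the other half of that tautology: given any $\bar y=\{y_n\}\in X_T$, the definition of $X_T$ furnishes some index sequence $\{i_n\}$ with $T_{i_n}(y_n)=y_{n+1}$, and then $\pi(\{i_n\},y_0)=\bar y$.

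Next I would verify continuity and the intertwining relation. Continuity of $\pi$ reduces to continuity of each coordinate map $(\{i_n\},x)\mapsto T_{i_{n-1}}\circ\cdots\circ T_{i_0}(x)$, which follows from continuity of the $T_i$ together with the fact that on $\Sigma_k$ the initial $n$ symbols are a locally constant function. For the intertwining,
$$
\pi\circ\tilde\sigma\bigl(\{i_n\},x\bigr)=\pi\bigl(\{i_{n+1}\},T_{i_0}x\bigr)
$$
produces the sequence whose $0$-th term is $T_{i_0}x=x_1$ and whose $(n{+}1)$-st term is $T_{i_{n+1}}\circ\cdots\circ T_{i_1}(T_{i_0}x)=x_{n+2}$, i.e.\ exactly $\sigma_T\circ\pi(\{i_n\},x)$.

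With $\pi$ established as a continuous surjective semiconjugacy from $(Y,\tilde\sigma)$ onto $(X_T,\sigma_T)$, the standard entropy monotonicity under factor maps (Theorem 7.2 of \cite{Walters}) gives $h(\sigma_T)\le h(\tilde\sigma)$, and hence $h(T)=h(\sigma_T)\le h(\tilde\sigma)$. There is no real obstacle here; the only thing one has to be slightly careful with is that $\pi$ is in general many-to-one (different symbolic sequences can produce the same orbit when two generators agree at a point), but this does not affect the fact that $\pi$ is a factor map, so the entropy inequality goes through.
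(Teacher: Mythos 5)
Your proposal is correct and follows essentially the same route as the paper: the same factor map $\pi(\{i_n\}_{n\in\mathbb{Z}_+},x)=\{x_n\}_{n\in\mathbb{Z}_+}$ with $x_0=x$, $x_{n+1}=T_{i_n}(x_n)$, verified to be continuous, surjective, and intertwining, followed by monotonicity of entropy under factor maps. The only cosmetic difference is that you argue continuity via local constancy of the initial symbols while the paper uses sequential convergence and uniform continuity of the $T_i$; both are fine.
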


\begin{proof}
Define a map $\pi: Y\rightarrow X_T$ by
$$
\tilde{\pi}\bigl((\{i_n\}_{n\in{\mathbb{Z}}_+},x)\bigr)=
\{x_n\}_{n\in{\mathbb{Z}}_+},
$$
where $x_0=x$ and $x_n=T_{i_{n-1}}\circ\cdots\circ T_{i_1}\circ T_{i_0}(x)$  for $n\geq 1$.
We claim that it is a semi-conjugacy between $\tilde{\sigma}$ and $\sigma_T$. In fact, we firstly have that $\pi\circ\tilde{\sigma}=\sigma_T\circ\pi$ from the definitions of $\pi, \tilde{\sigma}$ and $\sigma_T$. Secondly,
$\pi$ is surjective since for any point $\{x_n\}_{n\in{\mathbb{Z}}_+}\in X_T$
we can construct $\bigl(\{i_n\}_{n\in{\mathbb{Z}}_+},x\bigr)\in Y$
by setting $x=x_0$ and choosing $i_n$ for $n\geq 0$ inductively such that $i_n=j$ if $x_{n+1}=T_j(x_n)$ for some $1\leq j\leq k$ (Please note that the choice of $i_n$ may not be unique). Finally, $\pi$ is continuous since for any sequence of points $\{\overline{y}^{(i)}=\bigl(\{i_n^{(i)}\}_{n\in{\mathbb{Z}}_+},x^{(i)}\bigr)\}_{i\in{\mathbb{Z}}_+}$ tends to $\overline{y}=\bigl(\{i_n\}_{n\in{\mathbb{Z}}_+},x\bigr)$ as $i\longrightarrow \infty$, we have $\{i_n^{(i)}\}_{n\in{\mathbb{Z}}_+}\longrightarrow \{i_n\}_{n\in{\mathbb{Z}}_+}$ and $x^{(i)}\longrightarrow x$ as $i\longrightarrow \infty$, and hence from the definition of $\pi$, the uniform continuity of $T_i, 1\leq i\leq k$, and the topologies of $Y$ and $X_T$, $\pi(\overline{y}^{(i)})\longrightarrow \pi(\overline{y})$ as $i\longrightarrow \infty$.
Therefore, $\sigma_T$ is a factor of $\tilde{\sigma}$, and hence $h(T)=h(\sigma_T)\leq h(\tilde{\sigma}).$
\end{proof}

\section{$\mathbb{Z}_+^k$-actions on circles generated by expanding endomophisms}

Let $\mathbb{S}^1=\mathbb{R}/\mathbb{Z}$ be the unit circle with the ``geodesic" metric $d_{\mathbb{S}^1}$, i.e., for any $x, y\in\mathbb{S}^1$, $d_{\mathbb{S}^1}(x, y)$ is the length of the shorter path joining them.

As one of the simplest system
$f:\mathbb{S}^1\longrightarrow\mathbb{S}^1,x\longmapsto px(\mbox{mod
} 1)$, its entropy  $h(f)=\log p$.  However, for
$\mathbb{Z}_+^k$-action on circles which is generated by this kind
of endomorphisms, its entropy is not easy to be calculated. In
\cite{Friedland}, Friedland conjectured that for a
$\mathbb{Z}_+^2$-action on the circle
$\mathbb{S}^1$ whose generators are
$T_1=px(\mbox{mod } 1)$ and $T_2=qx(\mbox{mod } 1)$, where $p$ and
$q$ are two co-prime integers, its entropy $h(T)=\log(p+q)$. Soon
afterwards Geller and Pollicott \cite{Geller} answered this
conjecture affirmatively under a weaker condition ``$p, q$ are all
integers greater than 1". In this section we generalize the main result in \cite{Geller} to
$\mathbb{Z}_+^k$-action on circles.

\begin{theorem}\label{theorem1}
Let $T:\mathbb{Z}_+^k\longrightarrow C^0(\mathbb{S}^1,\mathbb{S}^1)$
be a continuous $\mathbb{Z}_+^k$-action on the circle
$X=\mathbb{S}^1$ with the generators $T_i(1\leq i\leq k)$ defined by
$T_i(x)=L_ix(\emph{mod } 1)$ where $L_i, 1\leq i\leq k,$ are all
integers greater than $1$ and are pairwise different. Then the
formula (\ref{circleentropy}) holds, i.e.,
$$
h(T)=\log \sum\limits_{i=1}^kL_i.
$$
\end{theorem}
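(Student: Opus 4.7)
The plan is to prove $h(T) \leq \log M$ and $h(T) \geq \log M$ separately, where $M := \sum_{i=1}^k L_i$. The upper bound is immediate from Proposition \ref{prop2}: each $T_i$ is Lipschitz on $(\mathbb{S}^1, d_{\mathbb{S}^1})$ with Lipschitz constant $L_i \geq 2$, so $L_+(T_i) = L_i$; since $D(\mathbb{S}^1) = 1$, Proposition \ref{prop2} yields $h(T) \leq \log \sum_{i=1}^k L_i^{D(\mathbb{S}^1)} = \log M$.

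For the lower bound I adapt Geller and Pollicott's method. The goal is to exhibit, for each small $\varepsilon > 0$ and each large $n$, a $(\sigma_T, n, \varepsilon)$-separated subset of $X_T$ of cardinality comparable to $M^n$. The construction assigns one orbit of $X_T$ to each pair $(\mathbf{i}, x)$, where $\mathbf{i} = (i_0, \ldots, i_{n-1}) \in \{1, \ldots, k\}^n$ is a word of symbols and $x$ ranges over an evenly spaced grid $G_\mathbf{i} \subset \mathbb{S}^1$ of $\prod_{j=0}^{n-1} L_{i_j}$ points, chosen so that the images $T_{i_{j-1}} \circ \cdots \circ T_{i_0}(x)$ for $x \in G_\mathbf{i}$ are macroscopically spaced at time $j = n$. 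For each $(\mathbf{i}, x)$, form
\[
\bar{x}^{\mathbf{i}, x} := \bigl(x,\ T_{i_0}(x),\ T_{i_1} T_{i_0}(x),\ \ldots,\ T_{i_{n-1}} \cdots T_{i_0}(x),\ \ldots\bigr) \in X_T,
\]
extending past time $n$ by iterating some fixed $T_\ell$. The total count of constructed orbits is $\sum_{\mathbf{i}} \prod_{j=0}^{n-1} L_{i_j} = M^n$. The expanding nature of the $T_i$ ensures that orbits sharing the same $\mathbf{i}$ but differing in $x$ become macroscopically separated in $d_{X_T}$ by the $n$-th iterate of $\sigma_T$, while distinct words $\mathbf{i} \neq \mathbf{i}'$ generically yield distinct images at the first position $j$ of disagreement because $(L_{i_j} - L_{i'_j}) x_j \not\equiv 0 \pmod{1}$ on a full-measure set of $x_j$, using that the $L_i$'s are pairwise distinct.

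The main obstacle is converting this heuristic ``generic separation'' into a quantitative bound uniform in $n$. Two kinds of pairs $((\mathbf{i}, x), (\mathbf{i}', x'))$ with $\mathbf{i} \neq \mathbf{i}'$ can spoil separation: orbits whose $\mathbb{S}^1$-coordinates all stay within $\varepsilon$ of each other on $[0, n)$, and orbits whose infinite tails accumulate large $d_{X_T}$-distance. The tail issue is handled by cutting off at a level $K = K(\varepsilon)$ chosen via the geometric decay of $d_{X_T}(\bar{x}, \bar{y}) = \sum_m 2^{-m} d_{\mathbb{S}^1}(x_m, y_m)$, so only the first $n+K$ coordinates of each orbit affect the $d_{X_T}$-distances among the relevant shifts. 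The near-coincidence issue is the heart of Geller and Pollicott's argument: at each disagreement position the constraint $|(L_{i_j} - L_{i'_j}) x_j \pmod 1| < \varepsilon$ forces $x_j$ into a union of $|L_{i_j} - L_{i'_j}|$ arcs of length $O(\varepsilon)$, and a careful combinatorial count shows that the number of pairs $(\mathbf{i}, x), (\mathbf{i}', x')$ simultaneously satisfying all such near-coincidences along an orbit is subexponentially smaller than $M^n$. After discarding these pairs, at least $c(\varepsilon) M^n$ of the constructed orbits remain pairwise $(\sigma_T, n, \varepsilon)$-separated, which on passing $n \to \infty$ and then $\varepsilon \to 0$ yields $h(T) \geq \log M$ and completes the proof.
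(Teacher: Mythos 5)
Your upper bound is correct and is in fact a different, simpler route than the paper's: since each $T_i$ is Lipschitz with constant $L_i$ and $D(\mathbb{S}^1)=1$, Proposition \ref{prop2} gives $h(T)\le\log\sum_{i=1}^kL_i$ directly, whereas the paper obtains the upper bound from a symbolic coding. The lower bound, however, contains a genuine gap: the whole argument rests on the sentence asserting that ``a careful combinatorial count shows that the number of pairs \dots is subexponentially smaller than $M^n$,'' and this count is neither carried out nor, as stated, sufficient. To extract a $(\sigma_T,n,\varepsilon)$-separated subset of size $c(\varepsilon)M^n$ from your $M^n$ constructed orbits by discarding non-separated pairs, you need the conflict graph to have bounded average degree, i.e.\ at most $C(\varepsilon)M^n$ bad pairs; a bound of the form ``subexponentially smaller'' (say $M^{2n}e^{-cn}$, out of $M^{2n}$ pairs in total) only yields, via a Tur\'an-type extraction, an independent set of size about $e^{cn}$, hence only $h(T)\ge c$, not $\log M$. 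Moreover, the count itself is delicate: the dangerous pairs are not the ``generic'' ones you dismiss via $(L_{i_j}-L_{i'_j})x_j\not\equiv 0$ (note that at the first disagreement the two orbits sit at different points $x_j\ne x_j'$, so this is not even the relevant condition), but pairs of distinct words that agree outside a short window and have equal partial products, e.g.\ transposed prefixes $(1,2,i_2,i_3,\dots)$ versus $(2,1,i_2,i_3,\dots)$: for these, the later constraints degenerate to the single family $\|\Pi_j(x-x')\|<\varepsilon$ with $\Pi_j=L_{i_{j-1}}\cdots L_{i_0}$, and one must simultaneously control the measure of the joint constraint set and the number of arcs it consists of (the arc-count boundary terms alone contribute on the order of $\Pi_{n-1}'$ grid points per word pair, and there are exponentially many word pairs). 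None of this is routine; it is the heart of the proof, not a footnote.

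This difficulty is precisely what the paper (following Geller and Pollicott) avoids by a different mechanism. There, the skew product $\tilde{\sigma}$ on $\Sigma_k\times\mathbb{S}^1$ is coded by an irreducible subshift of finite type on $k\prod_iL_i$ symbols whose transition matrix has all column sums equal to $\sum_iL_i$, so Perron--Frobenius gives $h(\hat{\sigma})=\log\sum_iL_i$; the coding map $\tilde{\pi}:\Lambda\rightarrow X_T$ is a semiconjugacy (giving the upper bound), and it fails to be injective only on a countable set, which is null for the Parry measure of maximal entropy $\nu$. Pushing $\nu$ forward and invoking the variational principle yields $h(\sigma_T)\ge h_{\tilde{\pi}^*\nu}(\sigma_T)=\log\sum_iL_i$ with no quantitative shadowing count at all. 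To repair your proof, either carry out the counting lemma in full with the bounded-average-degree bound described above, or replace your lower-bound argument with this measure-theoretic one.
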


\begin{proof}
As we have done in Proposition \ref{prop3}, denote
$Y=\Sigma_k\times X$ endowed with the product topology
and define a map $\tilde{\sigma}: Y\rightarrow Y$ by
$$
\tilde{\sigma}\bigl(\{i_n\}_{n\in{\mathbb{Z}}_+},x\bigr)=\bigl(\{i_{n+1}\}_{n\in{\mathbb{Z}}_+},
T_{i_0}x\bigr).
$$
This is a skew product over the shift transformation
$$
\sigma_k: \Sigma_k\rightarrow\Sigma_k,\;\;\;\;
\{i_n\}_{n\in{\mathbb{Z}}_+}\mapsto\{i_{n+1}\}_{n\in{\mathbb{Z}}_+}.
$$

Consider a cover of $\Sigma_k\times\mathbb{S}^1$ consisting of the
closed sets
$$
\Bigl\{[i]_0\times\Bigl[\frac{j}{M}, \frac{j+1}{M}\Bigr] \;:\; 1\leq
i\leq k,\; 0\leq j\leq M-1\Bigr\},
$$
where $[i]_0=\bigl\{\{i_n\}_{n\in{\mathbb{Z}}_+}\in\Sigma_k
\;:\;i_0=i\bigr\}$ and $M=\prod\limits_{i=1}^kL_i$. It is clear that this
cover consists of $kM$ elements. We label them by $\{B_l\}_{l=1}^{kM}$,
where
$$
B_l=[i]_0\times\Bigl[\frac{j}{M}, \frac{j+1}{M}\Bigr]
$$
for $l=(i-1)M+j+1,1\leq i \leq k, 0\leq j\leq M-1$. Now we can define
a $kM\times kM$ transition matrix $A$ by
$$
A(s,t)=\left\{\begin{array}{ll}
1 \quad &\mbox{if}\;\tilde{\sigma}(\mbox{int}B_s)\supset \mbox{int}B_t\\
0\quad &\mbox{if}\;\;\tilde{\sigma}(\mbox{int}B_s)\cap
\mbox{int}B_t=\emptyset.
\end{array}\right.
$$
This will take the form
$$
A=\left(
         \begin{array}{lr}
              Q_1\\
              \;\vdots\\
              Q_k
          \end{array} \right),
$$
where $Q_s (1\leq s\leq k)$ is an $M\times kM$ matrix with the form
$$
Q_s=\left(
         \begin{array}{lr}
              P_s\cdots P_s\\
              \;\;\vdots \;\;\;\;\;\;\vdots\;\;\\
              P_s\cdots P_s
          \end{array} \right),
$$
in which $P_s$ is a $\prod\limits_{j\neq s}^{k}L_j\times M$ matrix
given by
$$
P_s=\left(
         \begin{array}{lr}
              1 \cdots 1\;\; 0 \cdots 0 \cdots 0\cdots 0\\
              0 \cdots 0\; \;1 \cdots 1 \cdots 0\cdots 0\\
              \;\;\cdots\; \;\;\;\cdots\;\;\;\;\cdots\;\;\;\;\cdots\\
              \underbrace{0 \cdots 0}_{\times L_s}\; \;\underbrace{0 \cdots 0}_{\times L_s} \cdots \underbrace{1\cdots 1}_{\times L_s}\\
          \end{array} \right).
$$
By calculating, we can get that $A$ is irreducible. Denote
$$
\Lambda=\{\{z_n\}_{n\in \mathbb{Z}_+}\in
\Sigma_{kM}\;:\;A(z_n,z_{n+1})=1\;\mbox{for}\;n\geq 0\}
$$
and let $\hat{\sigma}: \Lambda\rightarrow\Lambda$ denote the
associated subshift of finite type. We observe that the column sums
in $A$ are all equal to $\sum\limits_{i=1}^{k}L_i$. Therefore, by Perron-Frobenius Theorem and Theorem 7.13 of \cite{Walters}, we obtain that
\begin{equation}\label{equition}
h(\hat{\sigma})=\log\sum\limits_{i=1}^{k}L_i.
\end{equation}

Consider the map $\tilde{\pi}: \Lambda\rightarrow X_T$ defined by
$\tilde{\pi}\bigl(\{z_n\}\bigr)=\{x_n\}$ with
$x_0=\bigcap\limits_{m=0}^\infty I_m$ where
$$
I_m=\bigcap_{n=0}^{m-1}(T_{i_n}\circ\cdots\circ
T_{i_1})^{-1}\Bigl[\frac{l_n}{M}, \frac{l_n+1}{M}\Bigr]
$$
and $x_{n+1}=T_{i_n}(x_n)$, in which $T_{i_n}=T_t$ if the element in
the cover according to $z_n$ is $\Bigl([t]_0,
\Bigl[\displaystyle\frac{l_n}{M},
\displaystyle\frac{l_n+1}{M}\Bigr]\Bigr)$. Since $I_0\supset
I_1\supset\cdots\supset I_m\supset\cdots$ is a nested sequence of
closed sets we have by compactness that $\bigcap\limits_{m=0}^\infty
I_m\neq \emptyset$. Moreover, since each $L_i$ is greater than 1,
each $T_i$ is expanding and hence $\lim\limits_{m\longrightarrow
\infty}\mbox{diam }I_m=0$. In particular, this intersection consists
of a single point, say $x_0$. Therefore the map $\tilde{\pi}$ is
well defined. Similar to what we have done to $\pi$ in the proof of
Proposition \ref{prop3} we can show that $\tilde{\pi}$ is  a
semi-conjugacy, i.e.,
$\sigma_T\circ\tilde{\pi}=\tilde{\pi}\circ\hat{\sigma}$. Hence
\begin{equation}\label{inequality1}
h(\sigma_T)\leq h(\hat{\sigma}).
\end{equation}

It only remains to show that
$h(\sigma_T)\geq\log\sum\limits_{i=1}^{k}L_i$. Observe that although
$\tilde{\pi}$ is surjective, it can fail to be injective. We claim
that the set on which injectivity fails is ``small". Assume that
$\tilde{\pi}\bigl(\{z_n\}\bigr)=\tilde{\pi}\bigl(\{z_n'\}\bigr)=\{x_n\}$
but $\{z_n\}\neq\{z_n'\}$. In particular, assume that $z_i=z_i'$ for
$0\leq i\leq n-1$, but $z_n\neq z_n'$. This can only happen if
$x_n\in \Bigl\{\displaystyle\frac{i}{M} \;:\;0\leq i\leq M\Bigr\}$,
since $L_i, 1\leq i\leq k$ are pairwise different. In particular, we
see that
$$
\Omega=\bigl\{\{z_n\}\in\Lambda\; :\; \mbox{card}
(\tilde{\pi}^{-1}(\tilde{\pi}(\{z_n\})))\geq 2\bigr\}
$$
is a countable set.

Since $A$ is irreducible then $\hat{\sigma}: \Lambda\rightarrow\Lambda$ is a transitive
subshift of finite type. Hence from \cite{Parry}, there is a unique measure of maximal
entropy, i.e., $\nu$ is the unique $\hat{\sigma}$-invariant
probability measure with entropy
$h(\hat{\sigma})=\log\sum\limits_{i=1}^{k}L_i$. Moreover,
$\nu$ is the Markov measure, it is clear that $\nu(\Omega)=0$ and so
$\tilde{\pi}: \bigl(\Lambda, \nu\bigr)\rightarrow(X_T,
\tilde{\pi}^*\nu)$ is an isomorphism. By the variational principle
we see that
\begin{eqnarray}\label{inequality2}
h(\sigma_T)&=&\sup\bigl\{h_{\mu}(\sigma_T) : \mu\;\text{is
any}\; \sigma_T
\;\text{invariant probability measure}\bigr\}\notag \\
&\geq&h_{\tilde{\pi}^*\nu}(\sigma_T)=h(\hat{\sigma}).
\end{eqnarray}

Combining (\ref{equition}), (\ref{inequality1}) and (\ref{inequality2}) we
obtain the desired equality (\ref{circleentropy}).
\end{proof}

\section{$\mathbb{Z}_+^k$-actions on tori generated by expanding endomorphisms}

Let $A:\mathbb{R}^n\longrightarrow \mathbb{R}^n$ be a non-singular integer
matrix. There is a natural induced endomorphism of the $n$-dimensional torus
$\mathbb{T}^n=\mathbb{R}^n/\mathbb{Z}^n$, for simplicity, we also
denote it by $A$. It is well known that for any endomorphism $A$ on the torus $\mathbb{T}^n$,  we have
\begin{equation}\label{autonomous}
h(A)=\sum_{|\lambda^{(j)}|>1}\log |\lambda^{(j)}| ,
\end{equation}
where $\lambda^{(1)},\cdots,\lambda^{(n)}$ are the eigenvalues of
$A$, counted with their multiplicities.

In section 3, we use Geller and Pollicott's method to get a formula of Friedland's entropy for expanding ${\mathbb{Z}}_+^k$-actions on circles. However, it seems that it is not easy to use a similar strategy to deal with the high dimensional cases.  In this section, we use other entropy-like invariants, the so called
preimage entropies, to estimate the Friedland's entropy for the ${\mathbb{Z}}_+^k$-actions on tori generated by expanding endomorphisms. In the following, we first state some basic notions and facts for preimage entropies. For more information about them, please refer to \cite{Hurley}, \cite{Nitecki1} and \cite{Nitecki2}.

Let $f$ be a continuous surjective map on a compact metric space $(X,d_X)$. There are many types of preimage entropies and we only present two of them here. The first one is the
\emph{pointwise preimage entropy} $h_m(f)$ which is defined by
\begin{eqnarray*}
h_m(f)&=&\lim_{\varepsilon\rightarrow 0}\limsup
_{n\rightarrow \infty}\frac{1}
{n}\log\sup_{x\in X}s_{d_X}(f, n,\varepsilon,f^{-n}(x))\\&=&\lim_{\varepsilon\rightarrow 0}\limsup
_{n\rightarrow \infty}\frac{1}
{n}\log\sup_{x\in X}r_{d_X}(f, n,\varepsilon,f^{-n}(x)).
\end{eqnarray*}
The second one is the \emph{preimage branch entropy} which is
defined as follows. For any $x\in X$ and $n\in\mathbb{Z}_+$, the
$n$-th order preimage tree of $x$ under $f$ is defined by
$$
\mathcal{T}_{n}(x,f)=\{[z_n, z_{n-1}, \cdots, z_1, z_0=x]\;:\; f(z_j)=z_{j-1} \;\;\text{for all}\;\;1\leq j\leq n \}.
$$
Each ordered set $\xi=[z_n, z_{n-1}, \cdots, z_1, z_0=x]\in \mathcal{T}_{n}(x,f)$ is called a \emph{branch} of $\mathcal{T}_{n}(x,f)$. For any two branches
$$
\xi=[z_n, z_{n-1}, \cdots, z_1, z_0=x]\in \mathcal{T}_{n}(x,f)\;\;\text{and}\;\;\eta=[z_n', z_{n-1}', \cdots, z_1', z_0'=y]\in \mathcal{T}_{n}(y,f),
$$
the branch distance between them is defined as
$$
d_X^B(\xi, \eta)=\max_{0\leq j\leq n}d_x(z_j,z_j').
$$
Let $\mathcal{T}_{n}(X,f)=\bigcup\limits_{x\in X}\mathcal{T}_{n}(x,f)$. We can define a \emph{branch-Hausdorff metric} $d_X^b$ on $\mathcal{T}_{n}(X,f)$ by
\begin{eqnarray}\label{branchmetric}
 d_X^b(\mathcal{T}_{n}(x,f), \mathcal{T}_{n}(y,f))
=\max\{\max_{\xi\in \mathcal{T}_{n}(x,f)}\min_{\eta\in \mathcal{T}_{n}(y,f)}d_X^B(\xi, \eta),
\max_{\eta\in \mathcal{T}_{n}(y,f)}\min_{\xi\in \mathcal{T}_{n}(x,f)}d_X^B(\eta, \xi)\}
\end{eqnarray}
for $\mathcal{T}_{n}(x,f)$ and $\mathcal{T}_{n}(y,f)$ in
$\mathcal{T}_{n}(X,f)$. Intuitively, $d_X^b(\mathcal{T}_{n}(x,f),
\mathcal{T}_{n}(y,f))<\varepsilon$ if and only if each branch of
either tree is $d_X^B$ within $\varepsilon$ of at least one branch
of the other tree. Let
$s_{d_X^b}(f, n,\varepsilon,\mathcal{T}_{n}(X,f))$ denote the maximum
cardinality of any $d_X^b$-$\varepsilon$-separated collection of
trees in $\mathcal{T}_{n}(X,f)$, and
$r_{d_X^b}(f, n,\varepsilon,\mathcal{T}_{n}(X,f))$ denote the minimum
cardinality of any $d_X^b$-$\varepsilon$-spanning collection of
trees in $\mathcal{T}_{n}(X,f)$. Then the preimage branch entropy
$h_i(f)$ is defined by
\begin{eqnarray*}
h_i(f)&=&\lim_{\varepsilon\rightarrow 0}\limsup
_{n\rightarrow \infty}\frac{1}
{n}\log s_{d_X^b}(f, n,\varepsilon,\mathcal{T}_{n}(X,f))\\&=&\lim_{\varepsilon\rightarrow 0}\limsup
_{n\rightarrow \infty}\frac{1}
{n}\log r_{d_X^b}(f, n,\varepsilon,\mathcal{T}_{n}(X,f)).
\end{eqnarray*}
Similar to that for the entropy $h(f)$, it is easy to see that $h_m(f)$ and $h_i(f)$ are unchanged by taking an equivalent metric on $X$. From Theorem 3.1 in \cite{Hurley}, we have the following inequalities relating these entropies
\begin{equation}\label{inequality*}
h_m(f)\leq h(f)\leq h_m(f)+h_i(f).
\end{equation}
For some recent progress in the study of preimage entropies in different forms and in
different settings, we refer to \cite{Cheng}, \cite{Zhang},
\cite{Zeng}, \cite{Zhu2} and \cite{Zhu3}.

\begin{theorem}\label{torus}
Let $T$ be a ${\mathbb{Z}}_+^k$-action on the torus
$\mathbb{T}^n$ with the generators $\{T_i=A_i\}_{i=1}^k$ which are pairwise different.  If
$\{A_i\}_{i=1}^k$ are all non-singular and all eigenvalues of $A_i$
are of modulus greater than $1$, then the inequality (\ref{torusentropy}) holds, i.e.,
$$
h(T)\leq\log\Bigl(\sum_{i=1}^{k}\prod\limits_{j=1}^{n}|\lambda_i^{(j)}|\Bigr) ,
$$
where $\lambda_i^{(1)},\cdots,\lambda_i^{(n)}$ are the eigenvalues
of $A_i$, counted with their multiplicities.
\end{theorem}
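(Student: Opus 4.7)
The plan is to apply the preimage entropy inequality (\ref{inequality*}) to the shift map $\sigma_T \colon X_T \to X_T$, whose topological entropy equals $h(T)$ by definition. This reduces the problem to bounding $h_m(\sigma_T)$ and $h_i(\sigma_T)$ separately, each to an ingredient of the right-hand side of (\ref{torusentropy}).

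For the pointwise preimage entropy I would count cardinalities directly. A point $\bar y \in \sigma_T^{-1}(\bar x)$ has the form $(y_0, x_0, x_1, \ldots)$ with $T_i(y_0) = x_0$ for some $1 \leq i \leq k$. Since each $T_i = A_i$ is an expanding endomorphism of $\mathbb{T}^n$ of degree $|\det A_i| = \prod_{j=1}^n |\lambda_i^{(j)}|$, we get $|\sigma_T^{-1}(\bar x)| = \sum_{i=1}^k \prod_{j=1}^n |\lambda_i^{(j)}|$ and, by induction, $|\sigma_T^{-n}(\bar x)| \leq \bigl(\sum_{i=1}^k \prod_{j=1}^n |\lambda_i^{(j)}|\bigr)^n$ for every $\bar x \in X_T$. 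Combined with the trivial estimate $s_{d_{X_T}}(\sigma_T, n, \varepsilon, \sigma_T^{-n}(\bar x)) \leq |\sigma_T^{-n}(\bar x)|$, this immediately yields $h_m(\sigma_T) \leq \log \sum_{i=1}^k \prod_{j=1}^n |\lambda_i^{(j)}|$.

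For the preimage branch entropy I would show that $h_i(\sigma_T) = 0$. Because $T$ is a $\mathbb{Z}_+^k$-action, its generators commute automatically, so $A_i A_j = A_j A_i$ and all the $A_i$ share a common (generalized) eigenspace decomposition over $\mathbb{C}$. Together with the assumption that every eigenvalue of each $A_i$ has modulus greater than $1$, this lets one construct a single norm $\|\cdot\|_*$ on $\mathbb{R}^n$ (dampening the nilpotent part on each common eigenspace by a small parameter) such that $\|A_i^{-1} v\|_* \leq \lambda \|v\|_*$ for some $\lambda \in (0,1)$ simultaneously for all $1 \leq i \leq k$. Every composition $A_{i_l}^{-1} \cdots A_{i_0}^{-1}$ is then a uniform contraction of factor at most $\lambda^{l+1}$. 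Given $\bar x, \bar x' \in X_T$ and a branch $\xi$ of $\mathcal{T}_n(\bar x, \sigma_T)$ with combinatorial inverse sequence $(i_0,\ldots,i_{n-1})$, I pair it with the branch $\eta$ of $\mathcal{T}_n(\bar x', \sigma_T)$ carrying the same inverse sequence; the uniform contraction together with the geometric weight $2^{-m}$ in the metric $d_{X_T}$ then gives
$$d_{X_T}^B(\xi, \eta) \leq C\, d_{X_T}(\bar x, \bar x')$$
with $C$ independent of $n$ and of the branch. Hence $d_{X_T}^b(\mathcal{T}_n(\bar x, \sigma_T), \mathcal{T}_n(\bar x', \sigma_T)) \leq C\, d_{X_T}(\bar x, \bar x')$, so the spanning number $r_{d_{X_T}^b}(\sigma_T, n, \varepsilon, \mathcal{T}_n(X_T, \sigma_T))$ is bounded by the covering number of $X_T$ by $d_{X_T}$-balls of radius $\varepsilon/C$, which is finite and independent of $n$. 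Therefore $h_i(\sigma_T) = 0$, and combining with the $h_m$ bound via (\ref{inequality*}) gives (\ref{torusentropy}).

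The main obstacle is the construction and use of the simultaneous adapted norm $\|\cdot\|_*$ in the second step. The commutativity of the $A_i$ is crucial here: without it, a product $A_{i_l}^{-1} \cdots A_{i_0}^{-1}$ of non-commuting contractions can have operator norm growing exponentially in $l$ in any fixed norm on $\mathbb{R}^n$ (the joint spectral radius can exceed $1$ even when every individual factor has spectral radius less than $1$), and the uniform branch-distance estimate would collapse. Verifying that simultaneous triangularization plus small Jordan-damping produces a common Lyapunov-type norm, and then executing the explicit weighted-sum estimate that turns the factor-wise contraction into the uniform branch bound displayed above, form the bulk of the technical work.
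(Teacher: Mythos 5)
Your proposal is correct and follows essentially the same route as the paper's proof: apply the inequality (\ref{inequality*}) to $\sigma_T$, bound $h_m(\sigma_T)\le\log\sum_{i=1}^{k}\prod_{j=1}^{n}|\lambda_i^{(j)}|$ by counting $\operatorname{card}(\sigma_T^{-l}(\bar{x}))\le\bigl(\sum_{i=1}^{k}N_i\bigr)^{l}$, and get $h_i(\sigma_T)=0$ because contracting inverse branches keep the preimage trees of $d_{X_T}$-close points close in the branch-Hausdorff metric, so one fixed finite spanning set of trees works for every order $l$. The only difference is that you justify the uniform contraction constant by building a common adapted norm from the (automatic) commutativity of the $A_i$, whereas the paper simply asserts a single $\lambda>1$ for all generators in the given metric; this extra care is harmless and, if anything, makes explicit a detail the paper glosses over for non-normal matrices.
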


\begin{proof}
Firstly, we show that
\begin{equation}\label{equality*}
h_i(\sigma_T)=0.
\end{equation}
 Since $\{A_i\}_{i=1}^k$ are all non-singular, for any $x\in \mathbb{T}^n$ and $1\leq i\leq k$,
 $$
\text{card}(T_i^{-1}(x))=|\det A_i|=\prod_{j=1}^{n}|\lambda_i^{(j)}|.
$$
For simplicity of notation, we denote $\prod\limits_{j=1}^{n}|\lambda_i^{(j)}|$ by $N_i$ and for $x\in \mathbb{T}^n$ denote
 $$
 T_i^{-1}(x)=\{x^{(1)},x^{(2)},\cdots,x^{(N_i)}\}.
 $$
 Since for each $1 \leq i\leq k$ all eigenvalues of $A_i$,
are of modulus greater than $1$, each $T_i$ is expanding. Hence, we can take $0<\varepsilon_0<\frac{1}{4}$ and $\lambda>1$, such that for any $0<\varepsilon\leq\varepsilon_0, x\in \mathbb{T}^n$ and $1\leq i\leq k$,
$$
T_i^{-1}(B_{d_X}(x,\varepsilon))=\bigcup_{j=1}^{N_i}U(x^{(j)}),
$$
where $U(x^{(j)})$ is an open neighborhood of $x^{(j)}$ for $1\leq j\leq N_i$, and $\{U(x^{(j)})\}_{j=1}^{N_i}$ are
pairwise disjoint, moreover, the restriction $T_i|_{U(x^{(j)})}: U(x^{(j)})\rightarrow B_{d_X}(x,\varepsilon)$ is a
homeomorphism and for any $z, z'\in U(x^{(j)})$,
$$
d_X(T_i(z), T_i(z'))\geq\lambda\cdot d_X(z,z').
$$
So for any $y\in B_{d_X}(x,\varepsilon)$, the corresponding $1$-th
order preimage tree $\mathcal{T}_1(y, T_i)$ lies in the
$d_X^b$-$\varepsilon$-neighborhood of the $1$-th order preimage tree
$\mathcal{T}_1(x, T_i)$ and actually
$$
d_X^b(\mathcal{T}_1(x, T_i), \mathcal{T}_1(y, T_i))=d_X(x, y)
$$
since $\lambda>1$. If for any finite sequence of endomorphisms $\bigl\{T_{n_i}\in
\{T_i, \cdots, T_k\}\bigr\}_{i=1}^l$ and any $x\in \mathbb{T}^n$,
let
$$
\mathcal{T}_l\bigl(x, \{T_{n_i}\}_{i=1}^l\bigr)=\bigl\{[z_l, z_{l-1}, \cdots, z_1, z_0=x]\;:\;T_{n_j}(z_j)=z_{j-1}\;\;\text{for all}\;\;1\leq j\leq l\bigr\}
$$
be the $l$-th order preimage tree of $x$ with respect to
$\{T_{n_i}\}_{i=1}^l$. Then for any $0<\varepsilon\leq\varepsilon_0$ and $y\in B_{d_X}(x,\varepsilon)$,
we can inductively conclude that the $l$-th order preimage tree
$\mathcal{T}_l\bigl(y, \{T_{n_i}\}_{i=1}^l\bigr)$ lies in the
$d_X^b$-$\varepsilon$-neighborhood $\mathcal{T}_l\bigl(x,
\{T_{n_i}\}_{i=1}^l\bigr)$, and actually
$$
d_X^b\bigl(\mathcal{T}_l(x, \{T_{n_i}\}_{i=1}^l),
\mathcal{T}_l(y, \{T_{n_i}\}_{i=1}^l)\bigr)=d_X(x,y),
$$
where $d_X^b$ is analogues to that in (\ref{branchmetric}).

From the above discussion and the definition of $d_{X_T}^b$ on the collection of $l$-th order preimage trees under $\sigma_T$, $\mathcal{T}_l(X_T, \sigma_T)$, we can see that
for any $\bar{x},\bar{y}\in X_T$ with $d(\bar{x},\bar{y})<\varepsilon\leq\varepsilon_0$, we have
$$
d_{X_T}^b\bigl(\mathcal{T}_l(\bar{x}, \sigma_T),
\mathcal{T}_l(\bar{y}, \sigma_T)\bigr)=d_{X_T}(\bar{x},\bar{y}).
$$
So, if a finite set
$\{\bar{x}^{(i)}\}$ is $d_{X_T}$-$\varepsilon_0$-dense in the compact space $X_T$, then for
any $l\in \mathbb{Z}_+, \bigl\{\mathcal{T}_l(\bar{x}^{(i)},
\sigma_T)\bigr\}$ is $d_{X_T}^b$-$\varepsilon_0$-dense in
$\mathcal{T}_l(X_T, \sigma_T)$. Therefore, $s_{d_{X_T}^b}(\sigma_T, l,
\varepsilon_0, \mathcal{T}_l(X_T, \sigma_T))$ is independent of $l$
and hence (\ref{equality*}) holds.

By (\ref{equality*}) and the inequalities in (\ref{inequality*}), we
have that
$$
h(T)=h(\sigma_T)=h_m(\sigma_T).
$$
Since for any $x\in \mathbb{T}^n$ and any $1\leq i\leq k$, card$(T_i^{-1}(x))=N_i$ and card$\bigl(\bigcup\limits_{i=1}^kT_i^{-1}(x)\bigr)\leq \sum\limits_{i=1}^kN_i$, and hence for any $l\in \mathbb{Z}_+$ and $\bar{x}=\{x_n\}_{n\in\mathbb{Z}^+}\in X_T$, card$(\sigma_T^{-l}(\bar{x}))\leq\Bigl(\sum\limits_{i=1}^{k}N_i\Bigr)^l$. Therefore,
\begin{equation}\label{h_m}
h_m(\sigma_T)\leq\log\sum\limits_{i=1}^kN_i,
\end{equation}
and then the desired inequality (\ref{torusentropy}) holds.
\end{proof}

By the way, we would like to say that our original intention is to show that the equality in  (\ref{torusentropy}) holds for expanding ${\mathbb{Z}}_+^k$-actions on tori. So far we can conclude as follows that for almost every $\bar{x}=\{x_n\}_{n\in\mathbb{Z}^+}\in X_T$, card$(\sigma_T^{-l}(\bar{x}))=\Bigl(\sum\limits_{i=1}^{k}N_i\Bigr)^l$ for any $l\in \mathbb{Z}_+$.
Let
$$
E=\{x\in \mathbb{T}^n:T_ix=T_jx\;\mbox{for some}\;1\leq i,j\leq k, i\neq j\}
$$
and
$$
F=\bigcup_{n=1}^{\infty}\bigcup_{1\leq i_1,\cdots,i_n\leq k}T_{i_1}\circ\cdots\circ T_{i_n}(E).
$$
 Since $\{A_i\}_{i=1}^k$ are all non-singular and pairwise different, then the Haar measure of $E$ is zero, and hence the Haar measure of $F$ is also zero.
So for $x\in \mathbb{T}^n\setminus F$, the cardinality of the set $\bigcup\limits_{i=1}^kT_i^{-1}(x)$ is exactly $\sum\limits_{i=1}^kN_i$. Therefore, for any $l\in \mathbb{Z}_+$ and $\bar{x}=\{x_n\}_{n\in\mathbb{Z}^+}\in X_T$ with $x_0\in \mathbb{T}^n\setminus F$, the cardinality of the $l$-th preimage set $\sigma_T^{-l}(\bar{x})$ is exactly $\Bigl(\sum\limits_{i=1}^{k}N_i\Bigr)^l$. We believe that for any $\bar{x}=\{x_n\}_{n\in\mathbb{Z}^+}\in X_T$ with $x_0\in \mathbb{T}^n\setminus F$ (even for any $\bar{x}=\{x_n\}_{n\in\mathbb{Z}^+}\in X_T$),
$$
\lim_{\varepsilon\longrightarrow 0}\limsup_{l\rightarrow \infty}\frac{1}
{l}\log s_{d_{X_T}}(\sigma_T,l,\varepsilon,\sigma_T^{-l}(\bar{x}))=\log\Bigl(\sum_{i=1}^{k}N_i\Bigr),
$$
and hence the equality in  (\ref{torusentropy}) holds.

 \begin{remark}
In \cite{Zhu},  the upper and lower bounds of the entropy of the nonautonomous dynamical systems on tori which are generated by expanding endomorphisms (Theorem 2.8 of \cite{Zhu}) were given. More precisely, let $A_{1,\infty}=\{A_i\}_{i=1}^{\infty}$ be a sequence of
equi-continuous surjective endomorphisms of ${\mathbb{T}}^n$. If for each
$i\in \mathbb{Z}_+$, all eigenvalues of $A_i$ are of modulus
greater than $1$, then
\begin{equation}\label{inequa6}
\limsup_{l\rightarrow\infty}
\frac{1}{l}\sum\limits_{i=1}^{l-1}\sum\limits_{j=1}^{n}\log|\lambda_i^{(j)}|
\leq h(A_{1,\infty})\leq\limsup_{n\rightarrow\infty}
\frac{n}{l}\sum\limits_{i=1}^{l-1}\log\Lambda_i^{(1)},
\end{equation}
where $\lambda_i^{(1)},\cdots,\lambda_i^{(n)}$ are the eigenvalues
of $A_i, i\in\mathbb{Z}_+$, counted with their multiplicities,
and $\Lambda_i^{(1)}$ is the biggest eigenvalue of
$\sqrt{A_iA_i^T}, i\in\mathbb{Z}_+$. From Theorem 3.1 and Theorem 5.1 in \cite{Zhang}, we have that
$h(A_{1,\infty})=h_m(A_{1,\infty})$.  Using the similar method in the proof of the above Theorem \ref{torus}, we can improve (\ref{inequa6}) to the following equality
$$
h(A_{1,\infty})=h_m(A_{1,\infty})=\limsup_{l\rightarrow\infty}
\frac{1}{l}\sum\limits_{i=1}^{l-1}\sum\limits_{j=1}^{n}\log|\lambda_i^{(j)}|.
$$
 \end{remark}

\section*{Acknowledgements} The authors would like to
thank the referee for the detailed review and very good suggestions,
which led to improvements of the paper.

\end{document}